\newcommand{\bdis}{\begin{displaymath}}
\newcommand{\edis}{\end{displaymath}}
\newcommand{\be}{\begin{equation}}
\newcommand{\ee}{\end{equation}}
\newcommand{\mbb}{\mathbb}
\newcommand{\mcal}{\mathcal}
\newcommand{\vp}{\varphi}
\newcommand{\vt}{\vartheta}
\newcommand{\mT}{\mathring{T}}
\newcommand{\hh}{h_\nu(\tau)}
\newcommand{\hhp}{h_{2\nu}(\tau)}
\newcommand{\zf}{\zeta\left(\frac{1}{2}+it\right)}
\newtheorem{lemma}[]{Lemma}
\theoremstyle{definition}
\theoremstyle{remark}
\newtheorem{remark}[]{Remark}
\newtheorem*{mydef1}{{\bf Theorem}}
\numberwithin{equation}{section}
\begin{document}

\title{Riemann hypothesis and the arc length of the Riemann $Z(t)$-curve}

\author{Jan Moser}

\address{Department of Mathematical Analysis and Numerical Mathematics, Comenius University, Mlynska Dolina M105, 842 48 Bratislava, SLOVAKIA}

\email{jan.mozer@fmph.uniba.sk}

\keywords{Riemann zeta-function}

\begin{abstract}
On Riemann hypothesis it is proved in this paper that the arc length of the Riemann $Z$-curve
is asymptotically equal to the double sum of local maxima of the function $Z(t)$ on corresponding segment.
This paper is English remake of our paper \cite{9}, with short appendix concerning new integral generated by
Jacob's ladders added.
\end{abstract}

\maketitle

\section{Introduction and result}

\subsection{}

Main object of this paper is the study of the integral
\be \label{1.1}
\int_T^{T+H} \sqrt{1+\{ Z'(t)\}^2}{\rm d}t,
\ee
i.e. the study of the arc length of the Riemann curve
\bdis
y=Z(t),\ t\in [T,T+H],\quad T\to\infty,
\edis
where (see \cite{13}, pp. 79, 329)
\be \label{1.2}
\begin{split}
 & Z(t)=e^{i\vt(t)}\zf, \\
 & \vt(t)=-\frac t2\ln\pi+\text{Im}\ln\Gamma\left(\frac 14+i\frac t2\right)= \\
 & = \frac{t}{2\pi}\ln\frac{t}{2\pi}-\frac t2-\frac{\pi}{8}+\mcal{O}\left(\frac 1t\right).
\end{split}
\ee

\begin{remark}
Let us remind that the formula
\be \label{1.3}
\begin{split}
& \{ Z(t)=\}\ e^{i\vt(t)}\zf= \\
& = 2\sum_{n\leq\sqrt{\bar{t}}}\frac{1}{\sqrt{n}}\cos\{\vt(t)-t\ln n\}+\mcal{O}(t^{-1/4}),\ \bar{t}=\sqrt{\frac{t}{2\pi}}
\end{split}
\ee
was known to Riemann (see \cite{11}, p. 60, comp. \cite{12}, p. 98).
\end{remark}

Next, we will denote the roots of the equations
\bdis
Z(t)=0,\ Z'(t)=0,\ t_0\not=\gamma
\edis
by the symbols
\bdis
\{\gamma\},\ \{ t_0\},
\edis
correspondingly.

\begin{remark}
On the Riemann hypothesis, the points of the sequences $\{\gamma\}$ and $\{ t_0\}$ are separated each from other (see \cite{3}, Corollary 3), i.e.
in this case we have
\bdis
\gamma'<t_0<\gamma'' ,
\edis
where $\gamma',\gamma''$ are neighboring points of the sequence $\{\gamma\}$. Of course, $Z(t_0)$ is local extremum of the function $Z(t)$ located
at $t=t_0$.
\end{remark}

\subsection{}

In this paper we use the Riemann hypothesis together with some synthesis of properties of the sequences
\bdis
\{ t_0\},\ \{h_\nu(\tau)\},
\edis
where the numbers $h_\nu(\tau)$ are defined by the equation (comp. (1.2))
\be \label{1.4}
\begin{split}
& \vt_1[h_\nu(\tau)]=\pi\nu+\tau+\frac{\pi}{2},\ \nu=1,2,\dots,\ \tau\in [-\pi,\pi], \\
& \vt_1(t)=\frac t2\ln\frac{t}{2\pi}-\frac t2-\frac{\pi}{8}, \\
& \vt(t)=\vt_1(t)+\mcal{O}\left(\frac 1t\right),
\end{split}
\ee
in order to obtain the following theorem.

\begin{mydef1}
On the Riemann hypothesis we have the asymptotic formula
\be \label{1.5}
\begin{split}
& \int_T^{T+H} \sqrt{1+\{ Z'(t)\}^2}{\rm d}t=2\sum_{T\leq t_0\leq T+H}|Z(t_0)|+\\
& + \Theta H+\mcal{O}\left( T^{\frac{\Delta}{\ln\ln T}}\right), \\
& \Theta=\Theta(T,H)\in (0,1),\ H=T^{\epsilon},\ T\to\infty
\end{split}
\ee
for every fixed $\epsilon>0$.
\end{mydef1}

\begin{remark}
Geometric meaning of our asymptotic formula (1.5) is as follows: the arc length of the Riemann curve
\bdis
y=Z(t),\ t\in [T,T+H]
\edis
is asymptotically equal to the double of the sum of local maxima of the function
\bdis
|Z(t)|,\ t\in[T,T+H].
\edis
\end{remark}

\section{Discrete formulae -- Lemma 1}

\subsection{}

In this part of the paper we use the following formula
\be \label{2.1}
\begin{split}
& Z'(t)=-2\sum_{n<P}\frac{1}{\sqrt{n}}\ln\frac Pn\sin\{\vt-t\ln n\}+ \\
& + \mcal{O}(T^{-1/4}\ln T),\ P=\sqrt{\frac{T}{2\pi}},
\end{split}
\ee
that we have obtained in our work \cite{6}, (see (2.1)). Next, we obtain from (2.1) in the case
\bdis
\vt\to \vt_1
\edis
(see (1.4)) that
\be \label{2.2}
\begin{split}
& Z'(t)=-2\sum_{n<P}\frac{1}{\sqrt{n}}\ln\frac Pn\sin\{\vt_1-t\ln n\}+ \\
& + \mcal{O}(T^{-1/4}\ln T),\ H\in (0,\sqrt[4]{T}].
\end{split}
\ee
Let $S(a,b)$ denotes elementary trigonometric sum
\bdis
S(a,b)=\sum_{a\leq n\leq b}n^{it},\quad
1\leq a<b\leq 2a,\ b\leq \sqrt{\frac{t}{2\pi}}.
\edis
Then we obtain from (2.2) in the case of the sequence $\hh$ (see (1.4)) the following

\begin{lemma}
If
\be \label{2.3}
|S(a,b)|\leq A(\Delta)\sqrt{a},\ \Delta\in (0,1/6]
\ee
then ($h_\nu=h_\nu(0)$)
\be \label{2.4}
\begin{split}
& \sum_{T\leq h_{2\nu}\leq T+H}Z'[h_{2\nu}(\tau)]=-\frac 1\pi H\ln^2P\cos\tau+\mcal{O}(T^\Delta\ln^2T), \\
& \sum_{T\leq h_{2\nu+1}\leq T+H}Z'[h_{2\nu+1}(\tau)]=\frac 1\pi H\ln^2P\cos\tau+\mcal{O}(T^\Delta\ln^2T),
\end{split}
\ee
where $\mcal{O}$-estimates are uniform for $\tau\in [-\pi,\pi]$.
\end{lemma}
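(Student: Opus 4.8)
The plan is to substitute $t=h_{2\nu}(\tau)$ into the Riemann--Siegel-type formula (2.2) and exploit the defining relation (1.4) for the sequence $\hh$. Since $\vt_1[h_{2\nu}(\tau)]=2\pi\nu+\tau+\frac\pi2$, every sine collapses to a cosine: using $\sin(2\pi\nu+\frac\pi2+x)=\cos x$ one gets
\[ Z'[h_{2\nu}(\tau)]=-2\sum_{n<P}\frac{1}{\sqrt n}\ln\frac Pn\cos\{\tau-h_{2\nu}(\tau)\ln n\}+\mcal{O}(T^{-1/4}\ln T), \]
the factor $(-1)^{2\nu}=1$ leaving the sign intact; for the odd subsequence $(-1)^{2\nu+1}=-1$ produces exactly the opposite overall sign, which is the only difference between the two formulas in (2.4). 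So I would prove the even case in full and then simply read off the odd one.

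Next I would sum over $T\le h_{2\nu}\le T+H$ and interchange the two summations. The number of terms is $N=\#\{\nu:T\le h_{2\nu}\le T+H\}=\frac{1}{2\pi}\{\vt_1(T+H)-\vt_1(T)\}+\mcal{O}(1)=\frac{H\ln P}{2\pi}+\mcal{O}(1)$, since $\vt_1'(t)=\frac12\ln\frac t{2\pi}$ varies by only $\mcal{O}(H/T)$ across the interval. The contribution of the error term in the displayed formula is then $\mcal{O}(NT^{-1/4}\ln T)=\mcal{O}(\ln^2 T)$ because $H\le\sqrt[4]T$. The main term comes entirely from $n=1$: there $\ln n=0$, the cosine equals $\cos\tau$, and the $n=1$ block contributes $-2\ln P\cos\tau\cdot N=-\frac1\pi H\ln^2 P\cos\tau+\mcal{O}(\ln T)$, which is precisely the leading term in (2.4). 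The count $N$ is stable in $\tau$ because each $h_{2\nu}(\tau)$ moves by only $\mcal{O}(1/\ln P)$ as $\tau$ ranges over $[-\pi,\pi]$, so the whole extraction is uniform for $\tau\in[-\pi,\pi]$.

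Everything then reduces to showing that the tail
\[ \sum_{2\le n<P}\frac{1}{\sqrt n}\ln\frac Pn\sum_{T\le h_{2\nu}\le T+H}\cos\{\tau-h_{2\nu}(\tau)\ln n\}=\mcal{O}(T^\Delta\ln^2 T) \]
uniformly in $\tau$. This is where I would invoke the hypothesis (2.3). Writing the cosine as $\mathrm{Re}\{e^{i\tau}n^{-ih_{2\nu}(\tau)}\}$, decomposing the range $2\le n<P$ into $\mcal{O}(\ln T)$ dyadic blocks $a\le n\le 2a$, and applying Abel summation against the smooth weight $n^{-1/2}\ln\frac Pn$, the bound $|S(a,b)|\le A(\Delta)\sqrt a$ controls each block for fixed $t=h_{2\nu}(\tau)$ (note $n<P=\sqrt{T/2\pi}$ matches the constraint $b\le\sqrt{t/2\pi}$). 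The delicate point is that a term-by-term bound over the $N\asymp H\ln T$ Gram-type points loses too much; one must combine (2.3) with the cancellation coming from the summation over the sequence $\{h_{2\nu}\}$, whose spacing is $\sim 2\pi/\ln P$ and whose phase $h_{2\nu}\ln n$ advances by $\sim 2\pi\ln n/\ln P$, i.e. with frequency $\ln n/\ln P$ bounded away from the integers. Balancing these two sources of cancellation so that the combined estimate reaches the exponent $\Delta\in(0,1/6]$ of the hypothesis, uniformly in $\tau$ and over the entire dyadic range up to $n<P$, is the main obstacle; the admissible range $H\le\sqrt[4]T$ is exactly what keeps all the secondary error terms within $\mcal{O}(T^\Delta\ln^2 T)$.
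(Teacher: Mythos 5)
Your setup is sound and reproduces the easy part of the paper's argument: substituting $t=h_{2\nu}(\tau)$ into (2.2) and using (1.4) to turn sines into cosines is exactly the paper's step (2.5), your count $N=\frac{H\ln P}{2\pi}+\mcal{O}(1)$ is the even-index version of the paper's (2.6), and your extraction of the $n=1$ main term $-\frac1\pi H\ln^2P\cos\tau$ with error $\mcal{O}(\ln^2T)$ from the remainder is correct. (One structural difference, though an equivalent one: the paper never sums over a single parity class directly; it computes the sum over \emph{all} $\nu$, where the alternating main terms cancel, giving (2.8), and the sum weighted by $(-1)^\nu$, where they reinforce, giving (2.9), and then obtains both lines of (2.4) as half the sum and half the difference of these.)

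The genuine gap is that your proof stops exactly where the lemma's content begins. The bound
\bdis
\sum_{2\le n<P}\frac{1}{\sqrt n}\ln\frac Pn\sum_{T\le h_{2\nu}\le T+H}\cos\{h_{2\nu}(\tau)\ln n-\tau\}=\mcal{O}(T^\Delta\ln^2 T)
\edis
is the only place where the hypothesis (2.3) enters, and you explicitly label the balancing of the two sources of cancellation as ``the main obstacle'' rather than carrying it out; a proof attempt that leaves this open has proved nothing beyond the trivial main-term computation. What is needed (and what the paper does, by invoking the machinery of \cite{6}, (54)--(64) and (65)--(79)) is to first perform the inner sum over $\nu$ in closed form: since $\vt_1(h_\nu)=\pi\nu+\tau+\frac\pi2$, the phases $h_\nu\ln n$ form a nearly arithmetic progression with increment $\omega=\pi\frac{\ln n}{\ln P}$, and summing the cosines produces the explicit expression $\bar w$ (respectively $R$) with factors $\tan\frac\omega2$; only after this reduction does one obtain sums of the shape $S(a,b)$ to which (2.3) applies dyadically. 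Note also that your parenthetical justification that the ``frequency $\ln n/\ln P$ [is] bounded away from the integers'' is false in the top dyadic block: as $n\to P$ the increment $2\pi\ln n/\ln P\to 2\pi$, so the cancellation over $\nu$ degenerates there, and what saves the estimate is not separation from integers but the vanishing weight $\ln\frac Pn$ (visible in the paper's $\bar w$ as the products $\ln\frac Pn\tan\frac\omega2$ staying bounded). Without the closed-form summation over $\nu$ and this compensation, the estimate you need cannot be reached from (2.3) alone.
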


\begin{proof}
We obtain from (2.2) by (1.4)
\be \label{2.5}
\begin{split}
& Z'[\hh]=2(-1)^{\nu+1}\ln P\cos\tau-\\
& -2\sum_{2\leq n\leq P}\frac{1}{\sqrt{n}}\ln\frac Pn\cos\{\pi\nu-\hh\ln n+\tau\}+\\
& +\mcal{O}(T^{-1/4}\ln T),\ \hh\in [T,T+H].
\end{split}
\ee
\end{proof}

\subsection{}

Since (see \cite{5}, (23))
\be \label{2.6}
\sum_{T\leq h_\nu\leq T+H}1=\frac{1}{2\pi}H\ln\frac{T}{2\pi}+\mcal{O}(1)=\frac 1\pi H\ln P+\mcal{O}(1),
\ee
then we obtain from (2.5) (comp. \cite{4}, (59)-(61), \cite{6}, (51)-(53)) that
\be \label{2.7}
\sum_{T\leq h_\nu\leq T+H} Z'[\hh]=-2\bar{w}(T,H;\tau)+\mcal{O}(\ln^2T),
\ee
where
\bdis
\begin{split}
& \bar{w}=\frac 12(-1)^{\bar{\nu}}\sum_n \frac{1}{\sqrt{n}}\ln\frac Pn\cos\vp+ \\
& +\frac 12(-1)^{N+\bar{\nu}}\sum_n \frac{1}{\sqrt{n}}\ln\frac Pn\cos(\omega N+\vp)+ \\
& +\frac 12(-1)^{\bar{\nu}}\sum_n \frac{1}{\sqrt{n}}\ln\frac Pn\tan\frac \omega 2\sin\vp+ \\
& +\frac 12(-1)^{N+\bar{\nu}+1}\sum_n \frac{1}{\sqrt{n}}\ln\frac Pn\tan\frac \omega 2\sin(\omega N+\vp),
\end{split}
\edis
where
\bdis
\omega=\pi\frac{\ln n}{\ln P},\ \vp=h_{\bar{\nu}}(\tau)\ln n-\tau,\ n\in [2,P),
\edis
and
\bdis
\bar{\nu}=\min\{\nu:\ h_\nu\in [T,T+H]\},\ \bar{\nu}+N=\max\{\nu:\ h_\nu\in [T,T+H]\}.
\edis
Of course, we have
\bdis
\sum_{T\leq \hh\leq T+H}1=\sum_{T\leq h_\nu\leq T+H}1+\mcal{O}(1)
\edis
for any fixed $\tau\in [-\pi,\pi]$. Now, it is clear that the method \cite{6}, (54)-(64) implies by (2.3) that
\bdis
\bar{w}=\mcal{O}(T^\Delta \ln^2T)
\edis
uniformly for $\tau\in [-\pi,\pi]$, and consequently we obtain (see (2.7)) the estimate
\be \label{2.8}
\sum_{T\leq h_\nu\leq T+H} Z'[\hh]=\mcal{O}(T^\Delta\ln^2T)
\ee
uniformly for $\tau\in [-\pi,\pi]$.

\subsection{}

Next, we have (see (2.5), (2.6))
\bdis
\begin{split}
& \sum_{T\leq h_\nu\leq T+H} (-1)^\nu Z'[\hh]=-\frac{2}{\pi} H\ln^2P\cos\tau-2R+\mcal{O}(\ln^2P), \\
& R=\sum_{2\leq n<P}\frac{1}{\sqrt{n}}\ln\frac Pn\sum_{T\leq h_\nu\leq T+H}\cos\{\hh\ln n-\tau\}.
\end{split}
\edis
Since by (2.3) and \cite{6}, (65)-(79) we have the estimate
\bdis
R=\mcal{O}(T^\Delta\ln^2T)
\edis
then we obtain the formula
\be \label{2.9}
\begin{split}
& \sum_{T\leq h_\nu\leq T+H} (-1)^\nu Z'[\hh]=\\
& = -\frac 2\pi H\ln^2P\cos\tau+\mcal{O}(T^\Delta\ln^2T)
\end{split}
\ee
uniformly for $\tau\in [-\pi,\pi]$. \\

Finally, from (2.8), (2.9) formulae (2.4) follow.

\section{Integrals over disconnected sets -- Lemma 2}

Let (comp. \cite{7}, (3))
\be \label{3.1}
\begin{split}
& \mbb{G}_{2\nu}(x)=\{ t:\ h_{2\nu}(-x)<t<h_{2\nu}(x),\ t\in [T,T+H]\},\ x\in (0,\pi/2], \\
& \mbb{G}_{2\nu+1}(y)=\{ t:\ h_{2\nu+1}(-y)<t<h_{2\nu+1}(y),\ t\in [T,T+H]\},\ y\in (0,\pi/2], \\
& \mbb{G}_1(x)=\bigcup_{T\leq h_{2\nu}\leq T+H}\mbb{G}_{2\nu}(x), \\
& \mbb{G}_2(y)=\bigcup_{T\leq h_{2\nu+1}\leq T+H}\mbb{G}_{2\nu+1}(y).
\end{split}
\ee
The following lemma holds true.

\begin{lemma}
(2.3) implies
\be \label{3.2}
\begin{split}
& \int_{\mbb{G}_1(x)}Z'(t){\rm d}t=-\frac{2}{\pi}H\ln P\sin x+\mcal{O}(xT^\Delta \ln T), \\
& \int_{\mbb{G}_2(y)}Z'(t){\rm d}t=\frac{2}{\pi}H\ln P\sin y+\mcal{O}(yT^\Delta \ln T).
\end{split}
\ee
\end{lemma}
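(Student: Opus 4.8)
The plan is to integrate the discrete formula of Lemma~1 over the parameter $\tau$, exploiting the geometric structure of the sets $\mbb{G}_1(x)$ and $\mbb{G}_2(y)$. Observe that each component $\mbb{G}_{2\nu}(x)=(h_{2\nu}(-x),h_{2\nu}(x))$ is swept out as the parameter $\tau$ ranges over $[-x,x]$ through the family $h_{2\nu}(\tau)$. Since $\vt_1[h_{2\nu}(\tau)]=\pi(2\nu)+\tau+\pi/2$, differentiating gives $\vt_1'[h_{2\nu}(\tau)]\,h_{2\nu}'(\tau)=1$, so the change of variable $t=h_{2\nu}(\tau)$ converts ${\rm d}t$ into $h_{2\nu}'(\tau)\,{\rm d}\tau=\{\vt_1'[h_{2\nu}(\tau)]\}^{-1}{\rm d}\tau$. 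Because $\vt_1'(t)=\frac12\ln\frac{t}{2\pi}=\ln P+\mcal{O}(H/T)$ is essentially constant at $\ln P$ on the short interval $[T,T+H]$, I expect this Jacobian to factor out as $(\ln P)^{-1}$ up to a controlled error.

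First I would write
\bdis
\int_{\mbb{G}_1(x)}Z'(t){\rm d}t=\sum_{T\leq h_{2\nu}\leq T+H}\int_{-x}^{x}Z'[h_{2\nu}(\tau)]\,h_{2\nu}'(\tau)\,{\rm d}\tau,
\edis
and then interchange the finite sum with the $\tau$-integral, giving $\int_{-x}^x\left(\sum_{T\leq h_{2\nu}\leq T+H}Z'[h_{2\nu}(\tau)]\,h_{2\nu}'(\tau)\right){\rm d}\tau$. Next I would replace $h_{2\nu}'(\tau)$ by its leading value $(\ln P)^{-1}$, pulling it outside the inner sum, and apply the first identity of Lemma~1 (formula (2.4)) to the resulting sum $\sum Z'[h_{2\nu}(\tau)]=-\frac1\pi H\ln^2P\cos\tau+\mcal{O}(T^\Delta\ln^2T)$. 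Integrating the main term over $\tau\in[-x,x]$ yields
\bdis
\frac{1}{\ln P}\int_{-x}^{x}\left(-\frac1\pi H\ln^2P\cos\tau\right){\rm d}\tau=-\frac{1}{\pi}H\ln P\int_{-x}^{x}\cos\tau\,{\rm d}\tau=-\frac{2}{\pi}H\ln P\sin x,
\edis
which is exactly the claimed main term; the same computation with (2.4)'s second identity and the odd-indexed family gives the $\mbb{G}_2(y)$ formula with the opposite sign.

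The error term requires care on two fronts. The $\mcal{O}(T^\Delta\ln^2T)$ from (2.4), after division by $\ln P\asymp\ln T$ and integration over an interval of length $2x\leq\pi$, contributes $\mcal{O}(xT^\Delta\ln T)$, matching the stated bound. The main obstacle I anticipate is justifying that the replacement of $h_{2\nu}'(\tau)$ by the uniform constant $(\ln P)^{-1}$ is legitimate inside the sum: one must show that the aggregate discrepancy $\sum_{T\leq h_{2\nu}\leq T+H}Z'[h_{2\nu}(\tau)]\{h_{2\nu}'(\tau)-(\ln P)^{-1}\}$ is absorbed into the error. Here $h_{2\nu}'(\tau)-(\ln P)^{-1}=\mcal{O}(H/(T\ln T))$ uniformly, while $Z'[h_{2\nu}(\tau)]=\mcal{O}(\ln P\cos\tau)=\mcal{O}(\ln T)$ term-by-term from the leading part of (2.5), and there are $\mcal{O}(H\ln P)$ summands by (2.6); multiplying these shows the correction is of smaller order than $T^\Delta\ln T$ provided $H=T^\epsilon$ stays in the admissible range $(0,\sqrt[4]{T}]$ demanded by (2.2). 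Assembling the main term with these two error contributions completes the proof.
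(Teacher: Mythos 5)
Your overall strategy coincides with the paper's own proof: change variables $t=h_{2\nu}(\tau)$ so that each component of $\mbb{G}_1(x)$ becomes the parameter interval $[-x,x]$, interchange the finite sum with the $\tau$-integral, apply Lemma 1 under the integral sign, and replace the Jacobian $h_{2\nu}'(\tau)=\{\vt_1'[h_{2\nu}(\tau)]\}^{-1}$ by $(\ln P)^{-1}$ up to a controlled error. Your main-term computation and the $\mcal{O}(xT^\Delta\ln T)$ contribution coming from the error term of (2.4) are exactly as in the paper.

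The genuine gap is in your control of the Jacobian-replacement discrepancy. You claim $Z'[h_{2\nu}(\tau)]=\mcal{O}(\ln T)$ ``term-by-term from the leading part of (2.5)''. That bound is not available: the leading term of (2.5) is indeed $\mcal{O}(\ln T)$, but the remaining sum $\sum_{2\leq n\leq P}n^{-1/2}\ln(P/n)\cos\{\pi\nu-h_{2\nu}(\tau)\ln n+\tau\}$ is trivially only $\mcal{O}(\sqrt{P}\ln P)=\mcal{O}(T^{1/4}\ln T)$, and no pointwise bound as strong as $\mcal{O}(\ln T)$ is known or used anywhere in the paper. What the paper does instead is derive, from (2.2) together with the hypothesis (2.3) via Abel transformation, the pointwise bound $Z'(t)=\mcal{O}(T^{\Delta}\ln^{2}T)$ on $[T,T+H]$, and it is this bound that enters the error estimate. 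Your argument is repaired by the same substitution: the discrepancy sum is then at most
\bdis
\mcal{O}(H\ln P)\cdot\mcal{O}(T^{\Delta}\ln^{2}T)\cdot\mcal{O}\left(\frac{H}{T\ln T}\right)=\mcal{O}\left(H^{2}T^{\Delta-1}\ln^{2}T\right),
\edis
which after integration over $\tau\in[-x,x]$ gives $\mcal{O}(xH^{2}T^{\Delta-1}\ln^{2}T)=\mcal{O}(xT^{-1/3}\ln^{2}T)$ for $H\leq T^{1/4}$ and $\Delta\leq 1/6$, hence negligible against $xT^{\Delta}\ln T$. With that one correction your proof becomes essentially the paper's proof.
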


\begin{proof}
First of all we have (see (1.4), comp. \cite{7}, (51))
\bdis
\left(\frac{{\rm d}h_{2\nu}(\tau)}{{\rm d}\tau}\right)^{-1}=\vt_1'[\hhp]=\ln P+\mcal{O}\left(\frac HT\right).
\edis
Next, from (2.2) by (2.3) we obtain the estimate
\bdis
Z'(t)=\mcal{O}(T^\Delta\ln^2T),\ t\in[T,T+H]
\edis
(Abel transformation). Then we have (comp. \cite{7}, (52)) that
\be \label{3.3}
\begin{split}
& \int_{-x}^xZ'[\hhp]{\rm d}\tau=\int_{-x}^xZ'[\hhp]\left(\frac{{\rm d}h_{2\nu}(\tau)}{{\rm d}\tau}\right)^{-1}
\frac{{\rm d}h_{2\nu}(\tau)}{{\rm d}\tau}{\rm d}\tau= \\
& = \ln P\int_{h_{2\nu}(-x)}^{h_{2\nu}(x)}Z'(t){\rm d}t+\mcal{O}\left( x\frac HT T^\Delta\ln^2T\frac{1}{\ln T}\right)= \\
& = \ln P \int_{\mbb{G}_{2\nu}(x)}Z'(t){\rm d}t+\mcal{O}(xHT^{-5/6}\ln T).
\end{split}
\ee
Consequently, we obtain from the first formula in (2.4) by (2.6), (3.1), (3.3) the following asymptotic equality
\bdis
\begin{split}
& \int_{\mbb{G}_1(x)}Z'(t){\rm d}t=-\frac{2}{\pi}H\ln P\sin x+\\
& + \mcal{O}(xT^\Delta\ln T)+\mcal{O}(xH^2T^{-5/6}\ln^2T),
\end{split}
\edis
i.e. the first integral in (3.2). The second integral can be derived by a similar way.
\end{proof}

\section{An estimate from below -- Lemma 3}

The following lemma holds true.

\begin{lemma}
From (2.3) the estimate
\be \label{4.1}
\int_T^{T+H} |Z'(t)|{\rm d}t>\frac{4}{\pi}(1-\epsilon)H\ln P,\ P=\sqrt{\frac{T}{2\pi}},\ H\in [T^{\Delta+\epsilon},\sqrt[4]{T}]
\ee
follows, where $\epsilon>0$ is an arbitrarily small number.
\end{lemma}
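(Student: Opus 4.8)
The plan is to exploit the geometry of the sets in (3.1): at the endpoint value $x=y=\pi/2$ the intervals $\mbb{G}_\nu(\pi/2)$ tile the whole segment $[T,T+H]$, so that the total integral of $|Z'|$ splits exactly across $\mbb{G}_1(\pi/2)$ and $\mbb{G}_2(\pi/2)$, on each of which Lemma 2 already controls the signed integral from below.

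First I would record the tiling. By (1.4) we have $\vt_1[h_\nu(\tau)]=\pi\nu+\tau+\pi/2$, so that $h_\nu(\pi/2)$ and $h_{\nu+1}(-\pi/2)$ both satisfy $\vt_1=\pi(\nu+1)$ and therefore coincide. Hence the open intervals $\mbb{G}_\nu(\pi/2)=(h_\nu(-\pi/2),h_\nu(\pi/2))$ are pairwise disjoint and their union exhausts $[T,T+H]$ apart from the discrete (measure-zero) set $\{h_\nu(\pi/2)\}$. Splitting into even and odd indices, $\mbb{G}_1(\pi/2)$ and $\mbb{G}_2(\pi/2)$ are disjoint and
\bdis
\int_T^{T+H}|Z'(t)|\,{\rm d}t=\int_{\mbb{G}_1(\pi/2)}|Z'(t)|\,{\rm d}t+\int_{\mbb{G}_2(\pi/2)}|Z'(t)|\,{\rm d}t.
\edis

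Next I would apply the elementary bound $\int_E|Z'|\geq\left|\int_E Z'\right|$ on each piece and insert Lemma 2 with $x=y=\pi/2$, where $\sin(\pi/2)=1$. The reverse triangle inequality turns the two asymptotic formulae of (3.2) into
\bdis
\int_{\mbb{G}_1(\pi/2)}|Z'|\geq\frac 2\pi H\ln P-\mcal{O}(T^\Delta\ln T),\quad \int_{\mbb{G}_2(\pi/2)}|Z'|\geq\frac 2\pi H\ln P-\mcal{O}(T^\Delta\ln T),
\edis
and adding these gives $\int_T^{T+H}|Z'|\geq\frac 4\pi H\ln P-\mcal{O}(T^\Delta\ln T)$.

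Finally, since $H\geq T^{\Delta+\epsilon}$ and $\ln P=\frac 12\ln T+\mcal{O}(1)$, the error satisfies $T^\Delta\ln T=\mcal{O}(T^{-\epsilon}H\ln P)=o(H\ln P)$, so for all sufficiently large $T$ it is smaller than $\frac 4\pi\epsilon H\ln P$; subtracting yields the strict inequality (4.1). The one point deserving care is the claim that $\mbb{G}_1(\pi/2)$ and $\mbb{G}_2(\pi/2)$ together cover $[T,T+H]$ with no overlap — equivalently, that the common half-width $\pi/(2\ln P)$ of each interval matches precisely half the gap $h_{\nu+1}-h_\nu$ — but this is immediate from the identity $h_\nu(\pi/2)=h_{\nu+1}(-\pi/2)$ noted above, and once it is in hand the remainder of the argument is purely the triangle inequality combined with the lower bound on $H$.
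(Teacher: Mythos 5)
Your proof is correct, and in all essentials it follows the paper's own route: both arguments evaluate Lemma 2 at $x=y=\pi/2$ (where $\sin x=\sin y=1$), convert the two signed integrals into lower bounds for integrals of $|Z'(t)|$ over disjoint subsets of $[T,T+H]$, add the two bounds, and absorb the $\mcal{O}(T^\Delta\ln T)$ errors using $H\geq T^{\Delta+\epsilon}$. The one place you genuinely diverge is the mechanism securing disjointness. The point you single out as "deserving care" --- the tiling identity $h_\nu(\pi/2)=h_{\nu+1}(-\pi/2)$ --- is exactly what the paper avoids needing: instead of the reverse triangle inequality on the full sets, it passes to the sign-definite subsets
\bdis
\mbb{G}_1^-(\pi/2)=\{ t\in\mbb{G}_1(\pi/2):\ Z'(t)<0\},\qquad
\mbb{G}_2^+(\pi/2)=\{ t\in\mbb{G}_2(\pi/2):\ Z'(t)>0\},
\edis
using $-\int_{\mbb{G}_1}Z'\,{\rm d}t\leq-\int_{\mbb{G}_1^-}Z'\,{\rm d}t=\int_{\mbb{G}_1^-}|Z'|\,{\rm d}t$ and its mirror image; these two sets are disjoint for the trivial reason that $Z'$ cannot be simultaneously negative and positive, so no geometric input about the intervals is required at all. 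Your geometric verification is nonetheless sound, with one small caveat: the claimed exact splitting $\int_T^{T+H}|Z'|\,{\rm d}t=\int_{\mbb{G}_1(\pi/2)}|Z'|\,{\rm d}t+\int_{\mbb{G}_2(\pi/2)}|Z'|\,{\rm d}t$ is not quite an equality, because the unions in (3.1) run only over $\nu$ with $h_\nu(0)\in[T,T+H]$, so pieces of length $\mcal{O}(1/\ln T)$ at the two ends of $[T,T+H]$ may be left uncovered. Since you only use the inequality $\geq$ (the uncovered mass can only help a lower bound), this costs nothing, but the word "exhausts" should be weakened accordingly.
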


\begin{proof}
Let (comp. \cite{8}, (10))
\bdis
\begin{split}
& \mbb{G}_1^+(x)=\{ t:\ Z'(t)>0,\ t\in \mbb{G}_1(x)\}, \\
& \mbb{G}_1^-(x)=\{ t:\ Z'(t)<0,\ t\in \mbb{G}_1(x)\}, \\
& \mbb{G}_1^0(x)=\{ t:\ Z'(t)=0,\ t\in \mbb{G}_1(x)\},
\end{split}
\edis
and the symbols
\bdis
\mbb{G}_2^+(y), \mbb{G}_2^-(y), \mbb{G}_2^0(y)
\edis
have similar meaning. Of course
\bdis
m\{\mbb{G}_1^0(x)\}=m\{\mbb{G}_2^0(y)\}=0.
\edis
Since the expressions (3.2) in the case
\bdis
H\in [T^{\Delta+\epsilon},\sqrt[4]{T}],\quad x,y\in (0,\pi/2]
\edis
are asymptotic formulae then from them we obtain the following inequalities
\be \label{4.2}
\begin{split}
& \frac{2}{\pi}(1-\epsilon)H\ln P<-\int_{\mbb{G}_1(\pi/2)}Z'(t){\rm d}t\leq \\
& \leq -\int_{\mbb{G}_1^-(\pi/2)}Z'(t){\rm d}t=\int_{\mbb{G}_1^-(\pi/2)}|Z'(t)|{\rm d}t, \\
&\frac{2}{\pi}(1-\epsilon)H\ln P<\int_{\mbb{G}_2(\pi/2)}Z'(t){\rm d}t\leq
\int_{\mbb{G}_2^+(\pi/2)}|Z'(t)|{\rm d}t.
\end{split}
\ee
Since
\bdis
\mbb{G}_1^-(\pi/2)\cup \mbb{G}_2^+(\pi/2)\subset [T,T+H],\ \mbb{G}_1^-(\pi/2)\cap \mbb{G}_2^+(\pi/2)=\emptyset
\edis
then by (4.2) needful estimate
\bdis
\begin{split}
& \int_T^{T+H}|Z'(t)|{\rm d}t\geq \int_{\mbb{G}_1^-(\pi/2)}|Z'(t)|{\rm d}t+\int_{\mbb{G}_2^+(\pi/2)}|Z'(t)|{\rm d}t> \\
& > \frac{4}{\pi}(1-\epsilon)H\ln P.
\end{split}
\edis
follows.
\end{proof}

\section{Quadrature formula -- Lemma 4}

The following lemma holds true.

\begin{lemma}
On Riemann hypothesis we have the following asymptotic formula
\be \label{5.1}
\begin{split}
& \int_T^{T+H}|Z'(t)|{\rm d}t=2\sum_{T\leq t_0\leq T+H}|Z(t_0)|+ \\
& + \mcal{O}\left( T^{\frac{A}{\ln\ln T}}\right),\quad H\in [T^\mu,\sqrt[4]{T}],
\end{split}
\ee
where $0<\mu$ is an arbitrary small number.
\end{lemma}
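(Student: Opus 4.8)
The plan is to read the left-hand side of (5.1) as the total variation of the real-analytic function $Z(t)$ on $[T,T+H]$ and to evaluate it cell by cell, the cells being the intervals cut out by the consecutive extrema. Denote the points of $\{t_0\}$ lying in $[T,T+H]$ by
\bdis
t_0^{(1)}<t_0^{(2)}<\dots<t_0^{(N)}.
\edis
Since the extrema are precisely the roots of $Z'(t)=0$, the derivative $Z'$ keeps a fixed sign on each open interval $(t_0^{(k)},t_0^{(k+1)})$ and on the two end cells $(T,t_0^{(1)})$, $(t_0^{(N)},T+H)$. Here I invoke the separation property of Remark 2 (on the Riemann hypothesis $\{\gamma\}$ and $\{t_0\}$ strictly interlace, see \cite{3}, Corollary 3): between two consecutive extrema lies exactly one zero $\gamma$, so $Z$ is strictly monotone from $t_0^{(k)}$ to $t_0^{(k+1)}$ and
\bdis
\int_{t_0^{(k)}}^{t_0^{(k+1)}}|Z'(t)|{\rm d}t=\left|Z(t_0^{(k+1)})-Z(t_0^{(k)})\right|.
\edis

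Next I would exploit the sign alternation. The unique zero inside $(t_0^{(k)},t_0^{(k+1)})$ forces $Z$ to change sign there, so $Z(t_0^{(k)})$ and $Z(t_0^{(k+1)})$ are of opposite sign (consecutive extrema being alternately maxima and minima). Hence $|Z(t_0^{(k+1)})-Z(t_0^{(k)})|=|Z(t_0^{(k)})|+|Z(t_0^{(k+1)})|$, and summation over $1\leq k\leq N-1$ telescopes to
\bdis
\sum_{k=1}^{N-1}\left(|Z(t_0^{(k)})|+|Z(t_0^{(k+1)})|\right)=2\sum_{k=1}^{N}|Z(t_0^{(k)})|-|Z(t_0^{(1)})|-|Z(t_0^{(N)})|.
\edis
Adjoining the two end-cell integrals $|Z(t_0^{(1)})-Z(T)|$ and $|Z(T+H)-Z(t_0^{(N)})|$, and using the triangle inequality to absorb the leftover terms $|Z(t_0^{(1)})|,|Z(t_0^{(N)})|$ (each end-cell integral differs from $|Z(t_0)|$ by at most the value of $|Z|$ at the endpoint), I obtain the exact bookkeeping identity
\bdis
\int_T^{T+H}|Z'(t)|{\rm d}t=2\sum_{T\leq t_0\leq T+H}|Z(t_0)|+\mcal{O}\left(|Z(T)|+|Z(T+H)|\right).
\edis

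The only genuine difficulty is now to bound the two boundary values, and this is where the Riemann hypothesis enters a second time. Since $|Z(t)|=|\zeta(1/2+it)|$, the classical conditional bound of Littlewood gives $|\zeta(1/2+it)|\ll\exp\left(A\frac{\ln t}{\ln\ln t}\right)=t^{A/\ln\ln t}$ on the Riemann hypothesis, whence $|Z(T)|+|Z(T+H)|=\mcal{O}(T^{A/\ln\ln T})$ (recall $T+H\asymp T$), which is exactly the error term in (5.1). The restriction $H\in[T^\mu,\sqrt[4]{T}]$ serves only to make (5.1) a genuine asymptotic formula: the lower bound, together with Lemma 3 and (4.1), yields $2\sum_{T\leq t_0\leq T+H}|Z(t_0)|>\frac{4}{\pi}(1-\epsilon)H\ln P-\mcal{O}(T^{A/\ln\ln T})\gg T^\mu\ln P$, which dominates the error, while the upper bound $\sqrt[4]{T}$ is inherited from the range of validity of the approximations underlying Lemmas 1--3. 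Thus the main obstacle is entirely the pointwise majorization of $|\zeta(1/2+it)|$ at the endpoints; everything else is the elementary total-variation counting above.
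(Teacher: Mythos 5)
Your proof is correct, and it is essentially the dual of the paper's argument. The paper partitions $[T,T+H]$ into cells $[\gamma',\gamma'']$ bounded by \emph{consecutive zeros}, each containing exactly one extremum $t_0$ (Remark 2); since $Z$ vanishes at both cell endpoints and $Z'$ has one sign on $(\gamma',t_0)$ and the opposite sign on $(t_0,\gamma'')$, each complete cell contributes the exact identity
\bdis
\int_{\gamma'}^{\gamma''}|Z'(t)|\,{\rm d}t=2|Z(t_0)|
\edis
(its (5.5)--(5.7)), and the two incomplete cells straddling $T$ and $T+H$ are estimated in (5.8) by invoking \emph{both} of Littlewood's conditional estimates, the zero-gap bound (5.2) and the size bound (5.3). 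You partition instead by \emph{consecutive extrema}: each cell contributes $|Z(t_0^{(k)})|+|Z(t_0^{(k+1)})|$ because the unique interior zero forces opposite signs at the endpoints, and the factor $2$ is recovered by telescoping. Both routes rest on the same two Riemann-hypothesis inputs -- the interlacing of $\{\gamma\}$ and $\{t_0\}$ from \cite{3}, and $Z(t)=\mcal{O}\left(t^{\frac{A}{\ln\ln t}}\right)$ -- but your bookkeeping concentrates the entire boundary error into the two endpoint values $|Z(T)|+|Z(T+H)|$, so you never need the gap estimate (5.2) at all; conversely, the paper's per-cell identity is a little cleaner, requiring no sign-alternation or telescoping argument. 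Your closing observation is also accurate: the restriction $H\in[T^{\mu},\sqrt[4]{T}]$ is not used in the variation count itself, serving only to keep the main term dominant (via Lemma 3) and to match the range of validity of Lemmas 1--3.
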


\begin{proof}
First of all, we have on Riemann hypothesis the following two Littlewood's estimates
\be \label{5.2}
\gamma''-\gamma'<\frac{A}{\ln\ln\gamma'},\ \gamma'\to\infty
\ee
(see \cite{2}, p. 237), and
\be \label{5.3}
Z(t)=\mcal{O}\left( t^{\frac{A}{\ln\ln t}}\right),\ t\to\infty
\ee
(see \cite{13}, p. 300). Next, on Riemann hypothesis we have the following basic configuration (see Remark 2)
\be \label{5.4}
\gamma'<t_0<\gamma'';\ t_0\in [T,T+H].
\ee
Now, there are following possibilities (see (5.4)): either
\be \label{5.5}
\begin{split}
& Z(t)>0,\ t\in (\gamma',\gamma'') \ \Rightarrow \\
& Z'(t)>0,\ t\in (\gamma',t_0),\ Z'(t)<0,\ t\in (t_0,\gamma''),
\end{split}
\ee
or
\be \label{5.6}
\begin{split}
& Z(t)<0,\ t\in(\gamma',\gamma'') \ \Rightarrow \\
& Z'(t)<0,\ t\in(\gamma',t_0),\ Z'(t)>0,\ t\in (t_0,\gamma'').
\end{split}
\ee
Consequently, (5.5) and (5.6) imply that
\be \label{5.7}
\int_{\gamma'}^{\gamma''}|Z'(t)|{\rm d}t=2|Z(t_0)|,\ \forall t_0\in [T,T+H].
\ee
Similarly, we obtain (see (5.2), (5.3)) the estimates
\be \label{5.8}
\int_{\bar{\gamma}'}^{\bar{\gamma}''}|Z'(t)|{\rm d}t,\ \int_{\bar{\bar{\gamma}}'}^{\bar{\bar{\gamma}}''}|Z'(t)|{\rm d}t=
\mcal{O}\left(\frac{T^{\frac{A}{\ln\ln T}}}{\ln\ln T}\right)
\ee
in the following cases
\bdis
\bar{\gamma}'<T\leq t_0<\bar{\gamma}'',\ \bar{\bar{\gamma}}'<t_0\leq T+H<\bar{\bar{\gamma}}''.
\edis
Now, our formula (5.1) follows from (5.7), (5.8).
\end{proof}

\section{Proof of Theorem}

We use the following formula
\be \label{6.1}
\begin{split}
& \int_T^{T+H}\sqrt{1+\{ Z'(t)\}^2}{\rm d}t= \\
& = \int_T^{T+H}|Z'(t)|{\rm d}t+\int_T^{T+H}\frac{1}{\sqrt{1+\{ Z'(t)\}^2}+|Z'(t)|}{\rm d}t.
\end{split}
\ee
Since
\bdis
0<\frac{1}{\sqrt{1+\{ Z'(t)\}^2}+|Z'(t)|}\leq 1
\edis
and
\be \label{6.2}
\left. \frac{1}{\sqrt{1+\{ Z'(t)\}^2}+|Z'(t)|}\right|_{t=t_0}=1,\ t_0\in [T,T+H] ,
\ee
i.e. the inequality (6.2) holds true for the finite set of values, then the mean-value theorem gives
\be \label{6.3}
\int_T^{T+H}\frac{1}{\sqrt{1+\{ Z'(t)\}^2}+|Z'(t)|}{\rm d}t=\Theta H,\
\Theta=\Theta(T,H)\in (0,1).
\ee
Next, we obtain by (4.1), (5.1), ($\mu\leq \epsilon$), the inequality
\be \label{6.4}
\begin{split}
& \frac{4}{\pi}(1-\epsilon)H\ln P<\int_T^{T+H}|Z'(t)|{\rm d}t=\\
& = 2\sum_{T\leq t_0\leq T+H}|Z'(t_0)|+\mcal{O}\left( T^{\frac{A}{\ln\ln T}}\right).
\end{split}
\ee
Hence, by (6.1)-(6.4) the formula (1.5) follows for
\be \label{6.5}
H\in [T^{\Delta+\epsilon},\sqrt[4]{T}].
\ee
Since the Riemann hypothesis implies Lindel\" of hypothesis a it implies that $\Delta=\epsilon$ (comp. \cite{1}, p. 89), then
we obtain from (6.5) that
\bdis
H=T^{2\epsilon};\ 2\epsilon\rightarrow \epsilon ,
\edis
(see (1.5)).

\appendix

\section{Influence of Jacob's ladders}

If
\bdis
\vp_1\{[\mT,\widering{T+H}]\}=[T,T+H],
\edis
then from (1.5) we obtain (see \cite{10}, (9.7)) the formula
\be \label{A.1}
\begin{split}
& \int_{\mT}^{\widering{T+H}}\sqrt{1+\{ Z'_{\vp_1}[\vp_1(t)]\}^2}\left|\zf\right|^2{\rm d}t\sim \\
& \sim \left\{ 2\sum_{T\leq t_0\leq T+H}|Z(t_0)|+\Theta H+\mcal{O}\left( T^{\frac{A}{\ln\ln T}}\right)\right\}\ln T,\ T\to\infty.
\end{split}
\ee
From (A.1) we obtain by mean-value theorem that
\be \label{A.2}
\begin{split}
&\int_{\mT}^{\widering{T+H}}\sqrt{1+\{ Z'_{\vp_1}[\vp_1(t)]\}^2}{\rm d}t\sim \\
& \sim \frac{\ln T}{\left|\zeta\left( \frac 12+i\alpha\right)\right|^2}
\left\{ 2\sum_{T\leq t_0\leq T+H} \left|\zeta\left( \frac 12+it_0\right)\right|+\Theta H+\mcal{O}\left( T^{\frac{A}{\ln\ln T}}\right)\right\}, \\
& \alpha\in (\mT,\widering{T+H}).
\end{split}
\ee

\begin{remark}
Since we have (see \cite{10}, (8.5))
\bdis
\rho\{ [T,T+H];[\mT,\widering{T+H}]\}\sim (1-c)\pi(T)>(1-\epsilon)(1-c)\frac{T}{\ln T},\ T\to\infty,
\edis
where $\rho$ denotes the distance of corresponding segments and $\pi(T)$ is the prime-counting function and $c$ is the Euler constant,
then the formula (A.2) gives strongly non-local expression for the integral on the left-hand side of (A.2).
\end{remark}

\thanks{I would like to thank Michal Demetrian for helping me with the electronic version of this work.}

\end{document}